\hfill \footnotesize {\rm M. Eshaghi Gordji, A. Ebadian and S.
Zolfaghari} \hfill
\hfill \footnotesize {\rm Stability of a functional equation
 ...}  \hfill$~$}
\begin{document}
\thispagestyle{empty}
 \setcounter{page}{1}

\begin{center}
{\large\bf Stability of a  functional equation deriving from cubic
and quartic  functions

\vskip.20in

{\bf  M. Eshaghi Gordji } \\[2mm]

{\footnotesize Department of Mathematics,
Semnan University,\\ P. O. Box 35195-363, Semnan, Iran\\
[-1mm] e-mail: {\tt madjid.eshaghi@gmail.com}}

{\bf  A. Ebadian } \\[2mm]
 {\footnotesize Department of Mathematics,
Urmia  University, Urmia, Iran\\
[-1mm] e-mail: {\tt ebadian.ali@gmail.com}}

{\bf  S. Zolfaghari } \\[2mm]

{\footnotesize Department of Mathematics,
Semnan University,\\ P. O. Box 35195-363, Semnan, Iran\\
[-1mm] e-mail: {\tt zolfagharys@Yahoo.com}}}
\end{center}
\vskip 5mm
 \noindent{\footnotesize{\bf Abstract.} In this paper, we obtain the general solution and the generalized
 Ulam-Hyers  stability of the cubic and quartic functional equation
\begin{align*}
&4(f(3x+y)+f(3x-y))=-12(f(x+y)+f(x-y))\\&+12(f(2x+y)+f(2x-y))-8f(y)-192f(x)+f(2y)+30f(2x).
\end{align*}

\vskip.10in

  \newtheorem{df}{Definition}[section]
  \newtheorem{rk}[df]{Remark}
   \newtheorem{lem}[df]{Lemma}
   \newtheorem{thm}[df]{Theorem}
   \newtheorem{pro}[df]{Proposition}
   \newtheorem{cor}[df]{Corollary}
   \newtheorem{ex}[df]{Example}

 \setcounter{section}{0}
 \numberwithin{equation}{section}

\vskip .2in

\begin{center}
\section{Introduction}
\end{center}
The stability problem of functional equations originated from a
question of Ulam [34] in 1940, concerning the stability of group
homomorphisms. Let $(G_1,.)$ be a group and let $(G_2,*)$ be a
metric group with the metric $d(.,.).$ Given $\epsilon >0$, does
there exist a $\delta
>0$, such that if a mapping $h:G_1\longrightarrow G_2$ satisfies the
inequality $d(h(x.y),h(x)*h(y)) <\delta$ for all $x,y\in G_1$,
then there exists a homomorphism $H:G_1\longrightarrow G_2$ with
$d(h(x),H(x))<\epsilon$ for all $x\in G_1?$ In the other words,
Under what condition does there exists a homomorphism near an
approximate homomorphism? The concept of stability for functional
equation arises when we replace the functional equation by an
inequality which acts as a perturbation of the equation. In 1941,
D. H. Hyers [9] gave the first affirmative  answer to the question
of Ulam for Banach spaces. Let $f:{E}\longrightarrow{E'}$ be a
mapping between Banach spaces such that
$$\|f(x+y)-f(x)-f(y)\|\leq \delta $$
for all $x,y\in E,$ and for some $\delta>0.$ Then there exists a
unique additive mapping $T:{E}\longrightarrow{E'}$ such that
$$\|f(x)-T(x)\|\leq \delta$$
for all $x\in E.$ Moreover if $f(tx)$ is continuous in
$t\in\mathbb{R}$  for each fixed $x\in E,$ then $T$ is linear.
Finally in 1978, Th. M. Rassias [31] proved the following theorem.

\begin{thm}\label{t1} Let $f:{E}\longrightarrow{E'}$ be a mapping from
 a normed vector space ${E}$
into a Banach space ${E'}$ subject to the inequality
$$\|f(x+y)-f(x)-f(y)\|\leq \epsilon (\|x\|^p+\|y\|^p) \eqno \hspace {0.5
 cm} (1.1)$$
for all $x,y\in E,$ where $\epsilon$ and p are constants with
$\epsilon>0$ and $p<1.$ Then there exists a unique additive mapping
$T:{E}\longrightarrow{E'}$ such that
$$\|f(x)-T(x)\|\leq \frac{2\epsilon}{2-2^p}\|x\|^p  \eqno \hspace {0.5
 cm}(1.2)$$ for all $x\in E.$
If $p<0$ then inequality (1.1) holds for all $x,y\neq 0$, and
(1.2) for $x\neq 0.$ Also, if the function $t\mapsto f(tx)$ from
$\Bbb R$ into $E'$ is continuous in real $t$ for each fixed $x\in
E,$ then T is linear.
\end{thm}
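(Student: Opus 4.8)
The plan is to use Hyers's \emph{direct method}: construct the additive map $T$ as a limit of normalized dilates of $f$, and control the error by a geometric series whose convergence is powered precisely by the hypothesis $p<1$. First I would specialize (1.1) to $y=x$ to get $\|f(2x)-2f(x)\|\le 2\epsilon\|x\|^p$, that is $\|2^{-1}f(2x)-f(x)\|\le\epsilon\|x\|^p$. Replacing $x$ by $2^n x$ and dividing by $2^n$ yields the one-step estimate
$$\left\|\frac{f(2^{n+1}x)}{2^{n+1}}-\frac{f(2^n x)}{2^n}\right\|\le \epsilon\,2^{n(p-1)}\|x\|^p.$$
Since $p<1$ we have $2^{p-1}<1$, so summing over $n$ shows that the sequence $T_n(x):=2^{-n}f(2^n x)$ is Cauchy; completeness of $E'$ then lets me define $T(x):=\lim_{n\to\infty}T_n(x)$. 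Telescoping the same estimate from $0$ to $n-1$ and letting $n\to\infty$ produces
$$\|f(x)-T(x)\|\le \epsilon\|x\|^p\sum_{k=0}^{\infty} 2^{k(p-1)}=\frac{2\epsilon}{2-2^p}\|x\|^p,$$
which is exactly (1.2), using $1-2^{p-1}=(2-2^p)/2$.

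Next I would verify that $T$ is additive. Feeding $2^n x$ and $2^n y$ into (1.1) and dividing by $2^n$ bounds $\|T_n(x+y)-T_n(x)-T_n(y)\|$ by $\epsilon\,2^{n(p-1)}(\|x\|^p+\|y\|^p)$, which tends to $0$; passing to the limit gives $T(x+y)=T(x)+T(y)$. For uniqueness, if $S$ is another additive map satisfying (1.2), then additivity gives $T(x)-S(x)=2^{-n}\bigl(T(2^nx)-S(2^nx)\bigr)$, and the triangle inequality with the two bounds forces $\|T(x)-S(x)\|\le \frac{4\epsilon}{2-2^p}\,2^{n(p-1)}\|x\|^p\to 0$, so $S=T$. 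For the case $p<0$ the identical computation goes through once we keep $x,y\ne 0$, since $\|x\|^p$ is then well defined and (1.2) is asserted only for $x\ne 0$.

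Finally, for linearity under the continuity hypothesis, I would note that additivity already gives $T(rx)=rT(x)$ for every rational $r$, so it suffices to prove that $t\mapsto T(tx)$ is continuous; then the additive function $t\mapsto T(tx)-tT(x)$ is continuous and vanishes on $\Q$, hence vanishes identically, giving $T(tx)=tT(x)$ for all real $t$. Continuity follows because each $t\mapsto T_n(tx)=2^{-n}f(2^n t x)$ is continuous by hypothesis, and the one-step estimate shows $T_n(tx)\to T(tx)$ uniformly on every bounded set $|t|\le M$ (the summand $\epsilon\,2^{n(p-1)}|t|^p\|x\|^p$ is summable and uniformly small there); a uniform limit of continuous functions is continuous.

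The main obstacle is the first paragraph. One must correctly identify the normalization $2^{-n}f(2^n x)$ and extract the decay factor $2^{n(p-1)}$, because it is precisely the sign of $p-1$ that decides whether the defining sequence converges and that pins down the sharp constant $2/(2-2^p)$; everything afterward is a routine limit argument built on that estimate.
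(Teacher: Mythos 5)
The paper does not prove this theorem at all: it is quoted verbatim as background from Th.\ M.\ Rassias's 1978 paper (reference [31]), so there is no internal proof to compare against. Your argument is the standard direct (Hyers--Rassias) method and is correct --- the substitution $y=x$, the Cauchy sequence $2^{-n}f(2^nx)$, the geometric sum $\sum_k 2^{k(p-1)}=\frac{2}{2-2^p}$, and the uniqueness and linearity steps are all right, and it is the same telescoping scheme the authors themselves use in their Theorems 3.1--3.3. One tiny caveat: for $p<0$ the convergence $T_n(tx)\to T(tx)$ is uniform only on sets $0<\delta\le|t|\le M$ rather than on all of $|t|\le M$ (since $|t|^p$ blows up at $t=0$), but continuity of the additive function $t\mapsto T(tx)-tT(x)$ at a single point already forces it to vanish identically, so your conclusion is unaffected.
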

In 1991, Z. Gajda [5] answered the question for the case $p>1$,
which was raised by Rassias.  This new concept is known as
Hyers-Ulam-Rassias stability of functional equations (see [1-2],
[5-10], [28-30]). On the other hand J. M. Rassias [25-27],
generalized the Hyers stability result by presenting a weaker
condition controlled by a product of different powers of norms.
According to J. M. Rassias Theorem:

\begin{thm}\label{t2} If it is assumed that there exist constants $\Theta\geq 0$ and
$p_1,p_2\in \Bbb R$ such that $p=p_1+p_2\neq 1,$ and
$f:{E}\longrightarrow{E'}$ is a map from a norm space ${E}$ into a
Banach space ${E'}$ such that the inequality
$$\|f(x+y)-f(x)-f(y)\|\leq \epsilon \|x\|^{p_1}\|y\|^{p_2} \eqno (1.1p)$$
for all $x,y\in E,$ then there exists a unique additive mapping
$T:{E}\longrightarrow{E'}$ such that
$$\|f(x)-T(x)\|\leq \frac{\Theta}{2-2^p}\|x\|^p, $$ for all $x\in E.$
If in addition for every $x\in E$, $f(tx)$  is continuous in real t
for each fixed x, then $T$ is linear (see [19-26]).
\end{thm}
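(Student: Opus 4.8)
The plan is to run the classical Hyers direct method adapted to the product control $\|x\|^{p_1}\|y\|^{p_2}$. First I would put $y=x$ in the hypothesis $(1.1p)$, which collapses the right-hand side to $\epsilon\|x\|^{p_1+p_2}=\epsilon\|x\|^{p}$ and yields the single-variable estimate $\|f(2x)-2f(x)\|\le\epsilon\|x\|^{p}$. (Setting $x=y=0$ forces $f(0)=0$ in the relevant range of exponents, so no constant term obstructs the iteration.) Dividing by $2$ gives $\|\tfrac12 f(2x)-f(x)\|\le\tfrac{\epsilon}{2}\|x\|^{p}$, which is the seed of the telescoping argument.

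Next, for the case $p<1$ I would define the Hyers sequence $f_n(x)=2^{-n}f(2^{n}x)$. Replacing $x$ by $2^{n}x$ in the seed estimate and dividing by $2^{n}$ produces $\|f_{n+1}(x)-f_n(x)\|\le\tfrac{\epsilon}{2}\,2^{n(p-1)}\|x\|^{p}$. Because $p\neq 1$ makes $2^{p-1}\neq 1$, the factors $2^{n(p-1)}$ form a convergent geometric series when $p<1$; summing $\sum_{k=n}^{m-1}$ shows that $\{f_n(x)\}$ is Cauchy in $E'$, which is complete, so $T(x):=\lim_n f_n(x)$ exists. Taking $n=0$ and letting $m\to\infty$ in the partial sums, together with $\sum_{k\ge 0}2^{k(p-1)}=\tfrac{2}{2-2^{p}}$, gives exactly $\|f(x)-T(x)\|\le\tfrac{\epsilon}{2-2^{p}}\|x\|^{p}$, i.e. the asserted bound with $\Theta=\epsilon$. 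For $p>1$ the same scheme runs with $f_n(x)=2^{n}f(2^{-n}x)$, where now $2^{1-p}<1$ drives the convergence; this case dichotomy is the one genuinely delicate point, since the choice of contracting versus expanding iteration is forced by the sign of $p-1$.

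Finally I would verify that $T$ is additive by estimating $\|T(x+y)-T(x)-T(y)\|=\lim_n 2^{-n}\|f(2^{n}(x+y))-f(2^{n}x)-f(2^{n}y)\|\le\lim_n \epsilon\,2^{n(p-1)}\|x\|^{p_1}\|y\|^{p_2}$, which tends to $0$ precisely because $p\neq 1$. Uniqueness follows in the standard way: any additive $T'$ meeting the same bound satisfies $\|T(x)-T'(x)\|=2^{-n}\|T(2^{n}x)-T'(2^{n}x)\|\le \tfrac{2\epsilon}{2-2^{p}}\,2^{n(p-1)}\|x\|^{p}\to 0$, forcing $T=T'$. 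For the last assertion, additivity already gives $T(qx)=qT(x)$ for every rational $q$; if $t\mapsto f(tx)$ is continuous then so is $t\mapsto T(tx)$, and continuity plus $\mathbb{Q}$-homogeneity upgrades to $T(tx)=tT(x)$ for all real $t$ by density, i.e. $T$ is linear. The main obstacle throughout is organizing the two exponent regimes so that a single estimate yields the sharp constant $\tfrac{\Theta}{2-2^{p}}$ in both.
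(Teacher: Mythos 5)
The paper does not prove this theorem at all: it is quoted as background (J.~M.~Rassias's product-type stability result) with a pointer to references [19--26], so there is no in-paper argument to compare against. Your proof is correct and is essentially the classical direct (Hyers) method that those sources use: the substitution $y=x$ collapses the product control to $\epsilon\|x\|^{p}$, after which the telescoping sequences $2^{-n}f(2^{n}x)$ (for $p<1$) and $2^{n}f(2^{-n}x)$ (for $p>1$) give the approximating additive map, with additivity, uniqueness, and linearity under the continuity hypothesis handled exactly as you describe. Two small remarks: your computation for $p>1$ yields the constant $\epsilon/(2^{p}-2)$, which shows that the bound $\Theta/(2-2^{p})$ as printed in the statement must be read with $|2-2^{p}|$ in that regime (an imprecision of the statement, not of your argument); and when $p_{1}$ or $p_{2}$ is negative the hypothesis only makes sense for $x,y\neq 0$, a caveat you acknowledge and which does not affect the iteration since it only ever uses $y=x\neq 0$.
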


The oldest cubic functional equation, and was introduced by J. M.
Rassias (in 2000-2001): [17-18], as follows:
$$f(x+2y)+3f(x)=3f(x+y)+f(x-y)+6f(y).$$
 Jun and Kim [11] introduced the following cubic
functional equation
 $$f(2x+y)+f(2x-y)=2f(x+y)+2f(x-y)+12f(x) \eqno(1.3)$$
and they established the general solution and the generalized
Hyers-Ulam-Rassias stability for the  functional equation (1.3).
The function $f(x)=x^3$ satisfies the functional equation (1.3),
which is thus called a cubic functional equation. Every solution
of the cubic functional equation is said to be a cubic function.
Jun and Kim proved that  a function
 $f$ between real vector spaces X and Y is a solution of (1.3) if and only if there
 exists a unique function $C:X\times X\times X\longrightarrow Y$ such that
 $f(x)=C(x,x,x)$ for all $x\in X,$ and $C$ is symmetric for each
 fixed one variable and is additive for fixed two variables.
The oldest quartic functional equation, and was introduced by J. M.
Rassias(in 1999-2000): [16], [27], and then (in 2005) was employed
by Won-Gil Park [15] and others, such that:

$$f(x+2y)+f(x-2y)=4(f(x+y)+f(x-y))+24f(y)-6f(x).\eqno \hspace {0.5cm}(1.4)$$
In fact they proved that a function
 $f$ between real vector spaces X and Y is a solution of (1.4) if and only if there
 exists a unique symmetric multi-additive function $Q:X\times X\times X\times X\longrightarrow Y$ such that
 $f(x)=Q(x,x,x,x)$ for all $x$ (see also [3,4], [12-15], [33]). It is easy to show that
 the function $f(x)=x^4$ satisfies the functional equation (1.4), which is called a
 quartic functional equation and every solution of the quartic functional equation is said to be a
 quartic function.

We deal with the following functional equation deriving from
quartic and
 cubic functions:
\begin{align*}
4(f(3x+y)+f(3x-y))&=-12(f(x+y)+f(x-y))+12(f(2x+y)+f(2x-y))\\
&-8f(y)-192f(x)+f(2y)+30f(2x). \hspace {3cm}(1.5)
\end{align*}
It is easy to see that
 the function $f(x)=ax^4+bx^3$ is a solution of the functional equation (1.5). In the
 present paper we investigate the general solution and the generalized
  Hyers-Ulam-Rassias stability of the functional equation (1.5).

\vskip 5mm \begin{center}
\section{ General solution}
\end{center}
In this section we establish the general solution of functional
equation (1.5).

\begin{thm}\label{t2}
Let $X$,$Y$ be vector spaces,  and let  $f:X\longrightarrow Y$  be
a function. Then $f$ satisfies (1.5) if and only if there exists a
unique symmetric multi-additive  function $Q:X\times X \times
X\times X \longrightarrow Y$ and a unique  function $C:X\times
X\times X \rightarrow Y$
 such that
$C$ is symmetric for each
 fixed one variable and is additive for fixed two variables, and
 that  $f(x)=Q(x,x,x,x)+C(x,x,x)$ for all $x\in X.$
\end{thm}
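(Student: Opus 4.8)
The plan is to prove the two implications by different means: the forward direction by direct verification, and the harder converse by an even--odd decomposition that splits (1.5) into a purely quartic and a purely cubic equation. For the ``if'' direction, suppose $f(x)=Q(x,x,x,x)+C(x,x,x)$ with $Q$ symmetric multi-additive and $C$ as described, and set $f_Q(x)=Q(x,x,x,x)$, $f_C(x)=C(x,x,x)$. I would expand $f_Q(ax+by)$ by multi-additivity and symmetry into $\sum_{k=0}^{4}\binom{4}{k}a^{k}b^{4-k}Q(x^{(k)},y^{(4-k)})$ (integer $a,b$, which is all that occurs in (1.5)), so that the quantity ``LHS of (1.5) minus RHS of (1.5)'' becomes a fixed integer combination of the terms $Q(x^{(k)},y^{(4-k)})$; that combination is exactly the one governing the monomial $x^{4}$, which one checks satisfies (1.5), so every coefficient vanishes. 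The same argument with the degree-$3$ expansion and the prototype $x^{3}$ handles $f_C$, and adding gives (1.5) for $f$.

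For the converse, I would first put $x=y=0$ in (1.5); the coefficients collapse to $177f(0)=0$, so $f(0)=0$. Write $f=f_e+f_o$ with $f_e(x)=\tfrac12(f(x)+f(-x))$ and $f_o(x)=\tfrac12(f(x)-f(-x))$. Since (1.5) is linear and homogeneous in $f$ and maps to itself under $(x,y)\mapsto(-x,-y)$, the function $x\mapsto f(-x)$ again solves (1.5); hence $f_e$ and $f_o$ both solve (1.5). Putting $x=0$ and using $f(0)=0$ reduces (1.5) to $4(f(y)+f(-y))=-8f(y)+f(2y)$, which yields the scalings $f_e(2y)=16f_e(y)$ and $f_o(2y)=8f_o(y)$.

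The core of the argument, and the step I expect to be the main obstacle, is to show that the even solution $f_e$ satisfies the quartic equation (1.4) and the odd solution $f_o$ the cubic equation (1.3). Using the scalings to eliminate the $f(2x),f(2y)$ terms, (1.5) for $f_e$ becomes $4(f_e(3x+y)+f_e(3x-y))=-12(f_e(x+y)+f_e(x-y))+12(f_e(2x+y)+f_e(2x-y))+8f_e(y)+288f_e(x)$, while for $f_o$ the $f_o(y)$ term disappears and $288f_o(x)$ is replaced by $48f_o(x)$. The hard part is the substitution bookkeeping that collapses these to (1.4) and (1.3): setting $y=x$ to obtain $f_e(3x)=81f_e(x)$ and $f_o(3x)=27f_o(x)$, swapping $x$ and $y$ (legitimate by parity) to produce the terms $f(x\pm 2y)$ and $f(x\pm 3y)$, and combining several instances at shifted arguments to cancel the $3x\pm y$ (resp.\ $x\pm 3y$) shifts. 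This is where the genuine work lies, since no single substitution suffices and one must assemble the right linear combination.

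Once the reductions are in place, Park's characterization of (1.4) supplies a unique symmetric multi-additive $Q\colon X^{4}\to Y$ with $f_e(x)=Q(x,x,x,x)$, and the Jun and Kim characterization of (1.3) supplies a unique $C\colon X^{3}\to Y$, symmetric in each fixed variable and additive in two, with $f_o(x)=C(x,x,x)$. Summing gives $f(x)=Q(x,x,x,x)+C(x,x,x)$. For uniqueness, I would note that any such representation forces $Q(x,x,x,x)$ to be the even part and $C(x,x,x)$ the odd part of $f$ (matching parities), so the even--odd decomposition of $f$ is determined, and then the uniqueness of $Q$ and $C$ is inherited from the two cited theorems.
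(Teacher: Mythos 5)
Your overall strategy coincides with the paper's: decompose $f$ into even and odd parts, use the substitutions $x=y=0$ and $x=0$ to get $f(0)=0$ and the scalings $f_e(2y)=16f_e(y)$, $f_o(2y)=8f_o(y)$, reduce (1.5) to the quartic equation (1.4) for $f_e$ and the cubic equation (1.3) for $f_o$, and then invoke the known characterizations of those equations together with a parity argument for uniqueness. Your preliminary computations are all correct (the constant $177f(0)=0$, the two scalings, and the simplified forms of (1.5) for $f_e$ and $f_o$ after eliminating the $f(2x)$ and $f(2y)$ terms match the paper's equations (2.2) and (2.13)), and your treatment of the forward implication and of uniqueness is, if anything, cleaner than the paper's.

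However, there is a genuine gap at exactly the point you yourself identify as ``where the genuine work lies.'' The reduction of the simplified equations to (1.4) and (1.3) is not a routine bookkeeping exercise that can be waved at: it is essentially the entire content of the paper's proof, occupying the chain of equations (2.2)--(2.11) for the even part and the considerably longer chain (2.13)--(2.40) for the odd part, in which roughly a dozen distinct substitutions ($y\mapsto 2y$, $y\mapsto x+2y$, $y\mapsto x+y$, $x\leftrightarrow y$, $y\mapsto -y$, and compositions of these) are combined in carefully chosen linear combinations to cancel the terms $f(3x\pm y)$, $f(3x\pm 2y)$, $f(2x\pm 3y)$, $f(x\pm 6y)$, etc. The auxiliary facts you propose ($f_e(3x)=81f_e(x)$, $f_o(3x)=27f_o(x)$, obtained by setting $y=x$) are true but are not by themselves sufficient, and it is not evident a priori that a linear combination collapsing everything to (1.4) and (1.3) exists; exhibiting one is the theorem's real work. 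As written, your proposal asserts the key step rather than proving it, so the argument is incomplete until that elimination is carried out explicitly.
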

\begin{proof} Suppose there exists a
 symmetric multi-additive  function $Q:X\times X \times
X\times X \longrightarrow Y$ and a  function $C:X\times X\times X
\rightarrow Y$
 such that
$C$ is symmetric for each
 fixed one variable and is additive for fixed two variables, and
 that
 $f(x)=Q(x,x,x,x)+C(x,x,x)$ for all $x\in X.$ Then it is easy to see
 that $f$ satisfies (1.5). For the converse let  $f$ satisfy (1.5). We decompose $f$ into the even part and odd
part by setting

$$f_e(x)=\frac{1}{2}(f(x)+f(-x)),~~\hspace {0.3 cm}f_o(x)=\frac{1}{2}(f(x)-f(-x)),$$
for all $x\in X.$ By (1.5), we have
\begin{align*}
4f_e(3x+y)&+4f_e(3x-y)=\frac{1}{2}[4f(3x+y)+4f(-3x-y)+4f(3x-y)+4f(-3x+y)]\\
&=\frac{1}{2}[4f(3x+y)+4f(3x-y)]+\frac{1}{2}[4f((-3x)+(-y))+4f((-3x)-(-y))]\\
&=\frac{1}{2}[12f(2x+y)+12f(2x-y)-12f(x+y)-12f(x-y)\\
&-8f(y)-192f(x)+f(2y)+30f(2x)]\\
&+\frac{1}{2}[12f(-2x-y)+12f((-2x)+y))-12f(-x-y)-12f(-x+y)\\
&-8f(-y)-192f(-x)+f(-2y)+30f(-2x)]\\
&=12[\frac{1}{2}(f(2x+y)+f(-(2x+y)))]+12[\frac{1}{2}(f(2x-y)+f(-(2x-y)))]\\
&-12[\frac{1}{2}(f(x+y)+f(-(x+y)))]-12[\frac{1}{2}(f(x-y)+f(-(x-y)))]\\
&-8[\frac{1}{2}(f(y)+f(-y))]-192[\frac{1}{2}(f(x)+f(-x))]\\
&+\frac{1}{2}[f(2y)+f(-2y)]+30[\frac{1}{2}(f(2x)+f(-2x))]\\
&=12(f_e(2x+y)+f_e(2x-y))-12(f_e(x+y)+f_e(x-y))\\
&-8f_e(y)-192f_e(x)+f_e(2y)+30f_e(2x)
\end{align*}
for all $x,y\in X.$ This means that $f_e$ satisfies (1.5), or
\begin{align*}
4(f_e(3x+y)+f_e(3x-y))&=-12(f_e(x+y)+f_e(x-y))+12(f_e(2x+y)+f_e(2x-y))\\
&-8f_e(y)-192f_e(x)+f_e(2y)+30f_e(2x). \hspace {2.5cm}(1.5(e))
\end{align*}
Now putting $x=y=0$ in (1.5(e)), we get $f_e(0)=0$. Setting $x=0$ in
(1.5(e)), by evenness of $f_e$ we obtain

$$f_e(2y)=16f_e(y) \eqno\hspace {4.5cm}(2.1)$$
for all $y\in X.$ Hence (1.5(e)) can be written as

\begin{align*}
&f_e(3x+y)+f_e(3x-y)+3(f_e(x+y)\\&+f_e(x-y))=3(f_e(2x+y)+f_e(2x-y))+72f_e(x)+2f_e(y)
\hspace {3.6cm}(2.2)
\end{align*}
for all $x,y \in X.$ With the substitution $y:=2y$ in (2.2), we have

\begin{align*}
&f_e(3x+2y)+f_e(3x-2y)+3f_e(x+2y)\\&+3f_e(x-2y)=48f_e(x+y)+48f_e(x-y)+72f_e(x)+32f_e(y).
\hspace {3.3cm} (2.3)
\end{align*}
Replacing $y$ by $x+2y$ in (2.2), we obtain

\begin{align*}
&16f_e(2x+y)+16f_e(x-y)+48f_e(x+y)\\&+48f_e(y)=3f_e(3x+2y)+3f_e(x-2y)+2f_e(x+2y)+72f_e(x)
.\hspace {3.3cm} (2.4)
\end{align*}
Substituting  $-y$ for $y$ in (2.4) gives

\begin{align*}
&16f_e(2x-y)+16f_e(x+y)+48f_e(x-y)\\&+48f_e(y)=3f_e(3x-2y)+3f_e(x+2y)+2f_e(x-2y)+72f_e(x)
.\hspace {3.3cm} (2.5)
\end{align*}
By utilizing equations (2.3), (2.4) and (2.5), we obtain

\begin{align*}
&4f_e(2x+y)+4f_e(2x-y)+f_e(x+2y)\\&+f_e(x-2y)=20f_e(x+y)+20f_e(x-y)+90f_e(x)
.\hspace {5.1cm} (2.6)
\end{align*}
Interchanging $x$ and $y$ in (2.3), we get

\begin{align*}
&f_e(2x+3y)+f_e(2x-3y)+3f_e(2x+y)\\&+3f_e(2x-y)=48f_e(x+y)+48f_e(x-y)+32f_e(x)+72f_e(y)
.\hspace {3.5cm} (2.7)
\end{align*}
If we add (2.3) to (2.7), we have

\begin{align*}
&f_e(2x+3y)+f_e(3x+2y)+f_e(2x-3y)+f_e(3x-2y)\\
&+3f_e(2x+y)+3f_e(x+2y)+3f_e(2x-y)\\
&+3f_e(x-2y)=96f_e(x+y)+96f_e(x-y)+104f_e(x)+104f_e(y). \hspace
{3.1cm} (2.8)
\end{align*}
And by utilizing equations (2.4), (2.5) and (2.8), we arrive at

\begin{align*}
&3f_e(2x+3y)+3f_e(2x-3y)=-25f_e(2x+y)-25f_e(2x-y)\\
&-4f_e(x-2y)-4f_e(x+2y)+224f_e(x+y)\\
&+224f_e(x-y)+456f_e(x)+216f_e(y). \hspace {7cm} (2.9)
\end{align*}
Let us interchange $x$ and $y$ in (2.9). Then we see that

\begin{align*}
&3f_e(3x+2y)+3f_e(3x-2y)=-25f_e(x+2y)-25f_e(x-2y)\\
&-4f_e(2x-y)-4f_e(2x+y)+224f_e(x+y)\\
&+224f_e(x-y)+456f_e(y)+216f_e(x).\hspace {7cm} (2.10)
\end{align*}
Comparing (2.10) with (2.3), we get

\begin{align*}
&4f_e(2x-y)+4f_e(2x+y)=-16f_e(x+2y)\\&-16f_e(x-2y)+80f_e(x+y)+80f_e(x-y)+360f_e(y)
.\hspace {4.6cm} (2.11)
\end{align*}
If we compare (2.11) and (2.6), we conclude that

$$f_e(x+2y)+f_e(x-2y)+6f_e(x)=4f_e(x+y)+4f_e(x-y)+24f_e(y).$$ This means that
$f_e$ is quartic  function. Thus there exists  a unique symmetric
multi-additive function $Q:X\times X\times X\times X\longrightarrow
Y$ such that $f_e(x)=Q(x,x,x,x)$ for all $x\in X.$ On the other hand
we can show that $f_o$ satisfies (1.5), or
\begin{align*}
4(f_o(3x+y)+f_o(3x-y))&=-12(f_o(x+y)+f_o(x-y))+12(f_o(2x+y)+f_o(2x-y))\\
&-8f_o(y)-192f_o(x)+f_o(2y)+30f_o(2x). \hspace {2.4cm}(1.5(o))
\end{align*}
 Now setting $x=y=0$ in (1.5(o)) gives
 $f_o(0)=0.$ Putting $x=0$ in (1.5(o)), then
 by oddness of $f_o$,  we have

$$f_o(2y)=8f_o(y). \eqno\hspace {3cm}(2.12)$$ Hence (1.5(o)) can be written as

\begin{align*}
&f_o(3x+y)+f_o(3x-y)+3f_o(x+y)\\&+3f_o(x-y)=3f_o(2x+y)+3f_o(2x-y)+12f_o(x)
\hspace {4.6cm} (2.13)
\end{align*}
for all $x,y\in X.$ Replacing $x$ by $x+y$, and $y$ by $x-y$ in
(2.13) we have

\begin{align*}
&8f_o(2x+y)+8f_o(x+2y)+24f_o(x)\\&+24f_o(y)=3f_o(3x+y)+3f_o(x+3y)+12f_o(x+y)
\hspace {4.6cm}(2.14)
\end{align*}
and interchanging $x$ and $y$ in (2.13) yields

\begin{align*}
&f_o(x+3y)-f_o(x-3y)+3f_o(x+y)\\&-3f_o(x-y)=3f_o(x+2y)-3f_o(x-2y)+12f_o(y).
\hspace {4.6cm}(2.15)
\end{align*}
Which on substitution of $-y$ for $y$ in (2.13) gives

\begin{align*}
&f_o(3x-y)+f_o(3x+y)+3f_o(x-y)\\&+3f_o(x+y)=3f_o(2x-y)+3f_o(2x+y)+12f_o(x)
.\hspace {4.6cm} (2.16)
\end{align*}
Replace $y$ by $x+2y$ in (2.13).  Then we have

\begin{align*}
&8f_o(2x+y)+8f_o(x-y)+24f_o(x+y)\\&-24f_o(y)=3f_o(3x+2y)+3f_o(x-2y)+12f_o(x)
.\hspace {4.6cm} (2.17)
\end{align*}
 From the substitution $y:=-y$ in (2.17) it follows that

\begin{align*}
&8f_o(2x-y)+8f_o(x+y)+24f_o(x-y)\\&+24f_o(y)=3f_o(3x-2y)+3f_o(x+2y)+12f_o(x)
.\hspace {4.6cm} (2.18)
\end{align*}
If we add (2.17) to (2.18),  we have

\begin{align*}
&3f_o(3x-2y)+3f_o(3x+2y)=8f_o(2x+y)+8f_o(2x-y)\\
&-3f_o(x+2y)-3f_o(x-2y)+32f_o(x-y)\\
&+32f_o(x+y)-24f_o(x). \hspace {8.3cm} (2.19)
\end{align*}
Let us interchange $x$ and $y$ in (2.19). Then we see that

\begin{align*}
&3f_o(2x+3y)-3f_o(2x-3y)=8f_o(x+2y)-8f_o(x-2y)\\
&-3f_o(2x+y)+3f_o(2x-y)+32f_o(x+y)\\
&-32f_o(x-y)-24f_o(y). \hspace {8.3cm} (2.20)
\end{align*}
With the substitution $y:=x+y$ in (2.13),  we have

\begin{align*}
&f_o(4x+y)+f_o(2x-y)+3f_o(2x+y)\\&-3f_o(y)=3f_o(3x+y)+3f_o(x-y)+12f_o(x)\hspace
{5.3cm} (2.21)
\end{align*}
and replacing $-y$ by $y$ gives

\begin{align*}
&f_o(4x-y)+f_o(2x+y)+3f_o(2x-y)\\&+3f_o(y)=3f_o(3x-y)+3f_o(x+y)+12f_o(x).\hspace
{5.3cm} (2.22)
\end{align*}
If we add (2.21) to (2.22), we have

\begin{align*}
&f_o(4x+y)+f_o(4x-y)=3f_o(3x+y)+3f_o(3x-y)\\
&-4f_o(2x-y)-4f_o(2x+y)+3f_o(x-y)\\
&+3f_o(x+y)+24f_o(x).\hspace {8.6cm} (2.23)
\end{align*}
By comparing (2.16) with (2.23), we arrive at

\begin{align*}
&f_o(4x+y)+f_o(4x-y)=5f_o(2x+y)+5f_o(2x-y)\\&-6f_o(x+y)-6f_o(x-y)+60f_o(x)\hspace
{6.7cm} (2.24)
\end{align*}
and replacing $y$ by $2y$ in (2.13) gives

\begin{align*}
&f_o(3x+2y)+f_o(3x-2y)=24f_o(x+y)+24f_o(x-y)\\&-3f_o(x+2y)-3f_o(x-2y)+12f_o(x).\hspace
{6.4cm} (2.25)
\end{align*}
By comparing (2.25) with (2.19), we arrive at

\begin{align*}
&3f_o(x+2y)+3f_o(x-2y)=20f_o(x+y)+20f_o(x-y)\\&-4f_o(2x+y)-4f_o(2x-y)+30f_o(x)
.\hspace {6.5cm} (2.26)
\end{align*}
Let us interchange $x$ and $y$ in (2.25).  Then we see that

\begin{align*}
&f_o(2x+3y)-f_o(2x-3y)=24f_o(x+y)-24f_o(x-y)\\&-3f_o(2x+y)+3f_o(2x-y)+12f_o(y)
.\hspace {6.5cm} (2.27)
\end{align*}
Thus combining (2.27) with (2.20) yields

\begin{align*}
&4f_o(x+2y)-4f_o(x-2y)=3f_o(2x-y)-3f_o(2x+y)\\&+20f_o(x+y)-20f_o(x-y)+30f_o(y)
.\hspace {6.5cm} (2.28)
\end{align*}
By comparing (2.28) with (2.15), we arrive at

\begin{align*}
&4f_o(x+3y)-4f_o(x-3y)=9f_o(2x-y)-9f_o(2x+y)\\&+48f_o(x+y)-48f_o(x-y)+138f_o(y)
.\hspace {6.3cm} (2.29)
\end{align*}
Which, by putting $y:=2y$ in (2.14), leads to

\begin{align*}
&64f_o(x+y)+8f_o(x+4y)+24f_o(x)\\&+192f_o(y)=3f_o(3x+2y)+3f_o(x+6y)+12f_o(x+2y)
.\hspace {3.9cm} (2.30)
\end{align*}
Replacing $y$ by $-y$ in (2.30) gives

\begin{align*}
&64f_o(x-y)+8f_o(x-4y)+24f_o(x)\\&-192f_o(y)=3f_o(3x-2y)+3f_o(x-6y)+12f_o(x-2y)
.\hspace {3.9cm} (2.31)
\end{align*}
If we subtract (2.30) from (2.31), we obtain

\begin{align*}
&8f_o(x+4y)-8f_o(x-4y)=3f_o(3x+2y)-3f_o(3x-2y)\\
&+3f_o(x+6y)-3f_o(x-6y)+12f_o(x+2y)-12f_o(x-2y)\\
&+64f_o(x-y)-64f_o(x+y)-384f_o(y). \hspace {6.3cm} (2.32)
\end{align*}
Setting $x$ instead of $y$ and $y$ instead of $x$ in (2.24), we get

\begin{align*}
&f_o(x+4y)-f_o(x-4y)=5f_o(x+2y)-5f_o(x-2y)\\&+6f_o(x-y)-6f_o(x+y)+60f_o(y).\hspace
{6.7cm} (2.33)
\end{align*}
Combining (2.32) and (2.33) yields

\begin{align*}
&3f_o(3x+2y)-3f_o(3x-2y)=28f_o(x+2y)-28f_o(x-2y)\\
&+3f_o(x-6y)-3f_o(x+6y)+16f_o(x+y)\\
&-16f_o(x-y)+864f_o(y) \hspace {8.3cm} (2.34)
\end{align*}
and  subtracting (2.18) from (2.17), we obtain

\begin{align*}
&3f_o(3x+2y)-3f_o(3x-2y)=3f_o(x+2y)-3f_o(x-2y)\\
&+8f_o(2x+y)-8f_o(2x-y)+16f_o(x+y)\\
&-16f_o(x-y)-48f_o(y). \hspace {8.3cm} (2.35)
\end{align*}
By comparing (2.34) with (2.35), we arrive at

\begin{align*}
&3f_o(x+6y)-3f_o(x-6y)=25f_o(x+2y)-25f_o(x-2y)\\&+8f_o(2x-y)-8f_o(2x+y)+912f_o(y)
.\hspace {6.3cm} (2.36)
\end{align*}
Interchanging $y$ with $2y$ in (2.29) gives the equation

\begin{align*}
&4f_o(x+6y)-4f_o(x-6y)=48f_o(x+2y)-48f_o(x-2y)\\&+72f_o(x-y)-72f_o(x+y)+1104f_o(y)
.\hspace {6.3cm} (2.37)
\end{align*}
We obtain from (2.36) and (2.37)

\begin{align*}
&44f_o(x+2y)-44f_o(x-2y)=32f_o(2x-y)-32f_o(2x+y)\\&+216f_o(x+y)-216f_o(x-y)+336f_o(y)
.\hspace {6cm} (2.38)
\end{align*}
By using (2.28) and (2.38), we lead to

$$f_o(2x+y)-f_o(2x-y)=4f_o(x+y)-4f_o(x-y)-6f_o(y). \eqno\hspace {0.5cm} (2.39)$$
And interchanging $x$ with $y$ in (2.39) gives

$$f_o(x+2y)+f_o(x-2y)=4f_o(x+y)+4f_o(x-y)-6f_o(x). \eqno\hspace {0.5cm} (2.40)$$
If we compare (2.40) and (2.26), we conclude that

$$8f_o(x+y)+8f_o(x-y)+48f_o(x)=4f_o(2x+y)+4f_o(2x-y). \hspace {0.5cm}$$
This means that $f_o$ is cubic function and that there exits a
unique function $C:X\times X\times X\longrightarrow Y$ such that
 $f_o(x)=C(x,x,x)$ for all $x\in X,$ and $C$ is symmetric for each
 fixed one variable and is additive for fixed two variables. Thus for all $x\in X$, we have
$$f(x)=f_e(x)+f_o(x)=C(x,x,x)+Q(x,x,x,x).$$  This
 completes the proof of Theorem.
\end{proof}
The following Corollary is an alternative result of above Theorem
2.1.

\begin{cor}\label{c1} Let $X$,$Y$
be vector spaces,  and let  $f:X\longrightarrow Y$  be a function
satisfying (1.5). Then the following assertions hold.

a) If f is even function, then f is quartic.

b) If f is odd
function, then f is cubic.
\end{cor}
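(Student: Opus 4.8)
The plan is to read this off directly from Theorem 2.1 by exploiting the parity of the two building blocks in the canonical decomposition. First I would record the elementary fact that, because $Q$ is multi-additive, the quartic term is even: replacing $x$ by $-x$ in all four slots pulls out a factor $(-1)^4=1$, so $Q(-x,-x,-x,-x)=Q(x,x,x,x)$. Likewise, because $C$ is additive in its fixed pairs of variables, the cubic term is odd: $C(-x,-x,-x)=(-1)^3C(x,x,x)=-C(x,x,x)$. Consequently, in the representation $f(x)=Q(x,x,x,x)+C(x,x,x)$ supplied by Theorem 2.1, the summand $Q(x,x,x,x)$ is precisely the even part $f_e$ of $f$ and $C(x,x,x)$ is precisely the odd part $f_o$.

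For part (a), I would suppose $f$ is even, so that $f_o\equiv 0$. This forces $C(x,x,x)=0$ for all $x$, whence $f(x)=Q(x,x,x,x)$; by the definition recalled in the Introduction, this says exactly that $f$ is quartic. For part (b), I would suppose $f$ is odd, so that $f_e\equiv 0$. This forces $Q(x,x,x,x)=0$ for all $x$, whence $f(x)=C(x,x,x)$ and $f$ is cubic. In both cases the uniqueness clause of Theorem 2.1 is what guarantees that the vanishing of one parity component really does annihilate the corresponding function $C$ or $Q$.

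Alternatively, the conclusion can be extracted straight from the internal computation of the proof of Theorem 2.1, without re-deriving the parity of $Q$ and $C$. That argument already established that $f_e$ satisfies the quartic equation (1.4) and that $f_o$ satisfies the cubic equation (1.3). Hence if $f$ is even then $f=f_e$ satisfies (1.4) and is quartic, while if $f$ is odd then $f=f_o$ satisfies (1.3) and is cubic.

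I do not expect any real obstacle here, since the corollary is a bookkeeping consequence of Theorem 2.1. The only point demanding a moment's care is confirming that the even/odd splitting of $f$ coincides with the quartic/cubic splitting of its representation, i.e. that $Q(x,x,x,x)$ carries all of the evenness and $C(x,x,x)$ all of the oddness; once this is observed, both implications are immediate.
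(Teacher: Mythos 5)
Your proposal is correct and matches the paper's intent: the paper gives no explicit proof of this corollary, presenting it as an immediate consequence of Theorem 2.1, whose proof already shows that $f_e$ satisfies the quartic equation (1.4) and $f_o$ satisfies the cubic equation (1.3) --- exactly your second argument. Your first argument (identifying $Q(x,x,x,x)$ with $f_e$ and $C(x,x,x)$ with $f_o$ via the parity of multi-additive forms) is an equally valid, essentially equivalent route.
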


\section{ Stability }

We now investigate the generalized Hyers-Ulam-Rassias stability
problem for functional equation (1.5).  From now on,  let X be a
real vector space and let Y be a Banach space. Now before taking up
the main subject, given $f:X\rightarrow Y$, we define the difference
operator $D_f:X\times X \rightarrow Y$ by

\begin{align*}
D_{f}(x,y)&=4[f(3x+y)+f(3x-y)]-12[f(2x+y)+f(2x-y)]+12[f(x+y)+f(x-y)]\\
&-f(2y)+8f(y)-30f(2x)+192f(x)
\end{align*}
for all $x,y \in X.$  We consider the following functional
inequality :

$$\|D_f(x,y)\|\leq\phi(x,y)$$
for an upper bound $\phi:X \times X\rightarrow [0,\infty).$
\begin{thm}\label{t2} Let $s\in \{1,-1\}$ be fixed. Suppose that an even mapping $f:X\rightarrow Y$ satisfies $f(0)=0,$ and
 $$\|D_f(x,y)\|\leq \phi(x,y) \eqno\hspace {0.5cm}(3.1)$$
 for all $x,y\in X. $ If the upper bound $\phi:X\times X\rightarrow
 [0,\infty)$ is a mapping such that the series
 $\sum^{\infty}_{i=0} 2^{4si} \phi(0,\frac{x}{2^{si}})$ converges,
  and that $\lim_{n\rightarrow\infty} 2^{4sn}
 \phi(\frac{x}{2^{sn}},\frac{y}{2^{sn}})=0$ for all $x,y\in X, $
 then the limit $Q(x)=\lim_{n\rightarrow\infty} 2^{4sn} f(\frac{x}{2^{sn}})$ exists for
 all $x\in X, $ and $Q:X\rightarrow Y$ is a unique quartic
 function satisfying (1.5), and
 $$ \|f(x)-Q(x)\|\leq \frac{1}{16}\sum^{\infty}_{i=\frac{s-1}{2}} 2^{4s(i+1)}
  \phi(0,\frac{x}{2^{s(i+1)}}) \eqno\hspace {0.5cm} (3.2)$$
 for all $x\in X. $

\end{thm}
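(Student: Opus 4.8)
The plan is to run the direct (Hyers) method, constructing $Q$ as the limit of a rescaled sequence built from $f$. Everything is driven by a one-variable recursion extracted from the hypothesis: setting $x=0$ in the definition of $D_f$ and using that $f$ is even with $f(0)=0$, all the two-variable terms collapse and a short computation gives $D_f(0,y)=16f(y)-f(2y)$. Thus (3.1) yields the seed estimate $\|f(2y)-16f(y)\|\leq\phi(0,y)$ for all $y\in X$, which is the germ from which the whole argument grows.

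Next I would assemble the approximating sequence $f_n(x):=2^{4sn}f(x/2^{sn})$. For $s=1$, replacing $y$ by $x/2^{i+1}$ in the seed estimate and multiplying by $2^{4i}$ gives $\|2^{4i}f(x/2^i)-2^{4(i+1)}f(x/2^{i+1})\|\leq 2^{4i}\phi(0,x/2^{i+1})$; for $s=-1$ one divides the seed estimate (with $y$ replaced by $2^i x$) by the appropriate power of $16$. In both cases the assumed convergence of $\sum_i 2^{4si}\phi(0,x/2^{si})$ forces $\{f_n(x)\}$ to be Cauchy in the Banach space $Y$, so $Q(x)=\lim_n f_n(x)$ exists for every $x$. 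Summing the one-step bounds telescopically and letting $n\to\infty$ produces precisely (3.2), the lower index $\tfrac{s-1}{2}$ being exactly what unifies the two sign cases.

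To see that $Q$ is quartic I would push the functional equation through the limit. Replacing $(x,y)$ by $(x/2^{sn},y/2^{sn})$ in (3.1) and multiplying by $2^{4sn}$ gives $\|D_{f_n}(x,y)\|\leq 2^{4sn}\phi(x/2^{sn},y/2^{sn})$, since $D_{f_n}(x,y)=2^{4sn}D_f(x/2^{sn},y/2^{sn})$ by linearity of $D$ in $f$. As $D_f$ is a fixed finite linear combination of translates, $D_{f_n}(x,y)\to D_Q(x,y)$ pointwise, while the right-hand side tends to $0$ by hypothesis; hence $D_Q(x,y)=0$, i.e.\ $Q$ satisfies (1.5). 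Being a pointwise limit of even functions, $Q$ is even, so Corollary~\ref{c1}(a) shows $Q$ is quartic.

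For uniqueness, suppose $Q'$ is another even quartic solution obeying (3.2). Being quartic, each satisfies the homogeneity $16^nQ(x/2^n)=Q(x)$, so $\|Q(x)-Q'(x)\|=16^n\|Q(x/2^n)-Q'(x/2^n)\|$; inserting $f(x/2^n)$ and applying (3.2) at the point $x/2^n$ turns the right-hand side into a tail of the convergent series $\sum_i 2^{4si}\phi(0,\cdot)$, which vanishes as $n\to\infty$, giving $Q=Q'$. I expect the main obstacle to be organizational rather than conceptual: carrying the cases $s=\pm1$ together with the exact constants so that the telescoped bound lands on (3.2) verbatim, and making sure the limit interchange in the quartic-verification step is genuinely licensed by the stated decay $2^{4sn}\phi(x/2^{sn},y/2^{sn})\to 0$ rather than merely by pointwise convergence of $f_n$.
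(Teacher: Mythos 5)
Your proposal is correct and follows essentially the same route as the paper: the same seed inequality $\|f(2y)-16f(y)\|\le\phi(0,y)$ obtained by setting $x=0$, the same telescoping/Cauchy argument for $2^{4sn}f(x/2^{sn})$, the same passage of $D_f$ through the limit combined with Corollary~2.2(a) to conclude $Q$ is quartic, and the same homogeneity-plus-tail argument for uniqueness. No substantive differences to report.
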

\begin{proof}
Let $s=1.$   Putting $x=0$ in (3.1),  we get

$$\|f(2y)-16f(y)\|\leq \phi(0,y). \eqno\hspace {0.5cm}(3.3)$$
Replacing $y$ by $\frac{x}{2}$ in (3.3), yields

$$\|f(x)-16f(\frac{x}{2})\|\leq\phi(0,\frac{x}{2}). \eqno\hspace {0.5cm}(3.4)$$
Interchanging $x$ with $\frac{x}{2}$ in (3.4),  and multiplying by
16 it follows that

$$\|16f(\frac{x}{2})-16^2f(\frac{x}{4})\|\leq 16   \phi(0,\frac{x}{4}).  \eqno\hspace
{0.5cm} (3.5)$$ Combining (3.4) and (3.5), we lead to

$$\parallel 16^2f(\frac{x}{4})-f(x)\parallel\leq
\phi(0,\frac{x}{2})+ 16 \phi(0,\frac{x}{4}). \eqno\hspace
{0.5cm}(3.6)$$ From the inequality (3.4) we use iterative methods
and induction on $n$ to prove our next relation:

$$\parallel 16^nf(\frac{x}{2^n})-f(x)\parallel\leq
\frac{1}{16}\sum^{n-1}_{i=0} 16^{i+1} \phi(0,\frac{x}{2^{i+1}}).
\eqno\hspace {0.5cm} (3.7)$$ We multiply (3.7) by $16^m$ and
replace $x$ by $\frac{x}{2^m}$ to obtain that
$$\parallel 16^{m+n}f(\frac{x}{2^{m+n}})-16^m f(\frac{x}{2^m})\parallel\leq
\sum^{n-1}_{i=0} 16^{m+i} \phi(0,\frac{x}{2^{i+m+1}}).$$ This
shows that $\{16^nf(\frac{x}{2^n})\}$ is a Cauchy sequence in Y by
taking the limit $m\rightarrow\infty$.  Since Y is a Banach space,
it follows that the sequence $\{16^nf(\frac{x}{2^n})\}$ converges.
We define $Q:X\rightarrow Y$ by $Q(x)=\lim_{n\rightarrow\infty}
2^{4n} f(\frac{x}{2^n})$ for all $x\in X.$ It is clear that
$Q(-x)=Q(x)$ for all $x\in X$,  and it follows from (3.1) that
$$\parallel D_Q(x,y)\parallel=\lim_{n\rightarrow\infty} 16^n \parallel
D_f(\frac{x}{2^n},\frac{y}{2^n})\parallel\leq\lim_{n\rightarrow\infty}
16^n \phi(\frac{x}{2^n},\frac{y}{2^n})=0$$ for all $x,y\in X.$
Hence by Corollary 2.2, Q is quartic.  It remains to show that Q
is unique. Suppose that there exists another quartic function
$Q':X\rightarrow Y$ which satisfies (1.5) and (3.2). Since
$Q(2^nx)=16^n Q(x)$,  and $Q'(2^nx)=16^n Q'(x)$ for all $x\in X, $
we conclude that

\begin{align*}
\parallel Q(x)-Q'(x)\parallel
&=16^n\parallel Q(\frac{x}{2^n})-Q'(\frac{x}{2^n})\parallel\\
&\leq16^n\parallel
Q(\frac{x}{2^n})-f(\frac{x}{2^n})\parallel+16^n\parallel
Q'(\frac{x}{2^n})-f(\frac{x}{2^n})\parallel\\
&\leq 2\sum^{\infty}_{i=0} 16^{n+i} \phi(0,\frac{x}{2^{n+i+1}})
\end{align*}
for all $x\in X. $ By letting $n\rightarrow\infty$ in this
inequality,  it follows that $Q(x)=Q'(x)$ for all $x\in X, $ which
gives the conclusion.  For $s=-1$, we obtain

$$\|\frac{f(2^mx)}{16^m}-f(x)\|\leq\frac{1}{16}\sum^{n-2}_{i=-1}\frac{\phi(0,2^{i+1}x)}{16^{i+1}},$$
from which one can prove the result by a similar technique.
\end{proof}

\begin{thm}\label{t'2} Let $s\in\{1,-1\}$ be fixed.  Suppose that an odd mapping $f:X\rightarrow Y$ satisfies

 $$\| D_f(x,y)\|\leq\phi(x,y) \eqno\hspace {0.5cm} (3.8)$$
for all $x,y\in X. $ If the upper bound $\phi:X\times
X\rightarrow[0,\infty)$ is a mapping such that
$\sum^{\infty}_{i=0} 2^{3si} \phi(0,\frac{x}{2^{si}}) $ converges,
and that $\lim_{n\rightarrow\infty}2^{3si}
\phi(\frac{x}{2^{si}},\frac{y}{2^{si}})=0$ for all $x,y\in X ,$
then the limit $C(x)=\lim_{n\rightarrow\infty} 2^{3sn}
f(\frac{x}{2^{sn}})$ exists for all $x\in X, $ and $C:X\rightarrow
Y$ is a unique cubic function satisfying (1.5),  and

$$\|f(x)-C(x)\|\leq \frac{1}{8}\sum^{\infty}_{i=\frac{s-1}{2}} 2^{3s(i+1)} \phi(0,\frac{x}{2^{s(i+1)}}) \eqno\hspace {0.5cm}(3.9)$$
for all $x\in X$.
\end{thm}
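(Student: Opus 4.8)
The plan is to follow the same scheme as in the proof of Theorem~3.1, with the quartic scaling factor $2^{4}=16$ replaced by the cubic scaling factor $2^{3}=8$, which is the multiplier that naturally governs odd solutions. I treat the case $s=1$ in detail; the case $s=-1$ is handled by the analogous upward iteration.

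First I would extract the basic two-term relation by setting $x=0$ in (3.8). Since $f$ is odd we have $f(0)=0$, and every grouped term $f(u)+f(-u)$ appearing in $D_f(0,y)$ cancels, leaving $D_f(0,y)=8f(y)-f(2y)$. Hence
$$\|f(2y)-8f(y)\|\leq\phi(0,y)$$
for all $y\in X$, and replacing $y$ by $x/2$ gives $\|f(x)-8f(x/2)\|\leq\phi(0,x/2)$.

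Next I would run the iteration exactly as for (3.7): interchanging $x$ with $x/2$ and multiplying by $8$ repeatedly, then summing the resulting telescoping estimates, induction on $n$ yields
$$\Bigl\|8^{n}f\Bigl(\tfrac{x}{2^{n}}\Bigr)-f(x)\Bigr\|\leq\frac{1}{8}\sum_{i=0}^{n-1}8^{i+1}\phi\Bigl(0,\tfrac{x}{2^{i+1}}\Bigr).$$
Because $8=2^{3}$, multiplying this by $8^{m}$ and replacing $x$ by $x/2^{m}$ shows that $\{8^{n}f(x/2^{n})\}$ is Cauchy, the tail being dominated by the convergent series $\sum 2^{3i}\phi(0,x/2^{i})$. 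Completeness of $Y$ then lets me define $C(x)=\lim_{n\to\infty}2^{3n}f(x/2^{n})$, and passing to the limit $n\to\infty$ in the displayed estimate produces the bound (3.9).

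It remains to identify $C$ and prove uniqueness. Oddness of each $8^{n}f(x/2^{n})$ passes to the limit, so $C$ is odd; and from $\|D_C(x,y)\|=\lim_n 8^{n}\|D_f(x/2^{n},y/2^{n})\|\leq\lim_n 2^{3n}\phi(x/2^{n},y/2^{n})=0$ we see that $C$ satisfies (1.5). Being odd, Corollary~2.2(b) forces $C$ to be cubic. Uniqueness follows verbatim from the argument in Theorem~3.1: if $C'$ is another cubic solution obeying (3.9), then using $C(2^{n}x)=8^{n}C(x)$ and the same identity for $C'$ one bounds $\|C(x)-C'(x)\|$ by $2\sum_{i=0}^{\infty}8^{n+i}\phi(0,x/2^{n+i+1})$, which tends to $0$ as $n\to\infty$. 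The only delicate point is the exponent bookkeeping: odd solutions scale like cubes, so the correct multiplier throughout is $2^{3}$ rather than $2^{4}$, and this is precisely what makes the hypothesis $\sum 2^{3si}\phi(0,x/2^{si})<\infty$ the natural summability condition. The case $s=-1$ is symmetric, starting instead from the estimate $\|\tfrac{1}{8}f(2x)-f(x)\|\leq\tfrac{1}{8}\phi(0,x)$ and iterating upward.
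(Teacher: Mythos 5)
Your proposal is correct and follows essentially the same route as the paper: set $x=0$ in (3.8) to get $\|8f(y)-f(2y)\|\le\phi(0,y)$, iterate with the factor $2^{3}=8$ to obtain the telescoping estimate, conclude that $\{8^{n}f(x/2^{n})\}$ is Cauchy, and then identify and uniquely characterize the limit exactly as in Theorem~3.1 (the paper itself only sketches this last part, deferring to that proof). No gaps.
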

\begin{proof} Let  $s=1.$ Set $x=0$  in (3.8).  We  obtain

$$\|8f(y)-f(2y)\|\leq \phi(0,y). \eqno\hspace {0.5cm}(3.10)$$
Replacing $y$ by  $\frac{x}{2}$  in (3.10) to get

$$\parallel 8f(\frac{x}{2})-f(x)\parallel \leq
\phi(0,\frac{x}{2}). \eqno\hspace {0.5cm}(3.11)$$ An induction
argument now implies

$$\parallel 8^n f(\frac{x}{2^n})-f(x)\parallel\leq\frac{1}{8}
\sum^{n-1}_{i=0} 8^{i+1}\phi(0,\frac{x}{2^{i+1}}). \eqno\hspace
{0.5cm}(3.12)$$ Multiply (3.12) by $8^m$ and replace $x$ by
$\frac{x}{2^m},$ we obtain that
$$\parallel 8^{m+n}
f(\frac{x}{2^{m+n}})-8^mf(\frac{x}{2^m})\parallel\leq\sum^{n-1}_{i=0}
8^{m+i} \phi(0,\frac{x}{2^{m+i+1}}). \eqno\hspace {0.5cm} (3.13)$$
The right hand side of the inequality (3.13) tends to $0$ as
$m\rightarrow\infty$ because of $$\sum^{\infty}_{i=0} 8^i
\phi(0,\frac{x}{2^{i+1}})<\infty$$ by assumption,  and thus the
sequence $\{2^{3n} f(\frac{x}{2^n})\}$ is Cauchy in Y,  as
desired. Therefore we may define a mapping $C:X\rightarrow Y$ as
$C(x)=\lim_{n\rightarrow\infty} 2^{3n}{f(\frac{x}{2^n}}). $ The
rest of proof is similar to the proof of Theorem 3.1.
\end{proof}

\begin{thm}\label{t2} Let $s\in\{1,-1\}$ be fixed. Suppose a mapping  $f:X\rightarrow Y$
 satisfies $f(0)=0, $ and $\|D_f (x,y)\|\leq \phi(x,y)$ for  all $x,y \in X
.$ If the upper bound $\phi:X\times X\rightarrow [0,\infty)$ is a
mapping such that $$\sum^{\infty}_{i=0} [(|s|+s) 2^{4si}
\phi(0,\frac{x}{2^{si-1}})+(|s|-s) 2^{3si}
\phi(0,\frac{x}{2^{si-1}}) ]<\infty, \eqno\hspace {0.5cm} (3.14)$$
and

$$\lim_{n\rightarrow\infty}[(|s|+s) 2^{(4sn-1)} \phi(\frac{x}{2^{sn}},\frac{y}{2^{sn}})+(|s|-s) 2^{3sn} \phi(\frac{x}{2^{sn}},\frac{y}{2^{sn}})]=0, \eqno\hspace {0.5cm}
(3.15)$$ for all $x,y\in X. $ Then there exists a unique quartic
function $Q:X\rightarrow Y$ and a unique cubic function
$C:X\rightarrow Y$ satisfying

$$\parallel
f(x)-Q(x)-C(x)\parallel\leq\sum^{\infty}_{i=\frac{s-1}{2}}
\{(\frac{2^{4s(i+1)}}{32}+\frac{2^{3s(i+1)}}{16})
[\phi(0,\frac{x}{2^{s(i+1)}})+\phi(0,\frac{-x}{2^{s(i+1)}})]\}$$
for all $x\in X. $

\end{thm}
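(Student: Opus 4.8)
The plan is to reduce this combined statement to the two preceding theorems by the same even--odd splitting used in the proof of Theorem 2.1. First I would write $f=f_e+f_o$ with $f_e(x)=\frac12(f(x)+f(-x))$ and $f_o(x)=\frac12(f(x)-f(-x))$. Since $D_f$ is linear in $f$, and since the substitution $x\mapsto-x,\ y\mapsto-y$ sends each argument $3x\pm y,\ 2x\pm y,\ x\pm y,\ 2x,\ 2y$ to its negative, one checks directly that $D_g(x,y)=D_f(-x,-y)$ for $g(x):=f(-x)$. Hence $D_{f_e}(x,y)=\frac12[D_f(x,y)+D_f(-x,-y)]$ and $D_{f_o}(x,y)=\frac12[D_f(x,y)-D_f(-x,-y)]$, so both parts satisfy the perturbed equation with the symmetrized upper bound
$$\psi(x,y):=\tfrac12\bigl[\phi(x,y)+\phi(-x,-y)\bigr].$$
Note also $f_e(0)=f_o(0)=0$, so the hypotheses of Theorems 3.1 and 3.2 are available.

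Next I would apply Theorem 3.1 to the even mapping $f_e$ and Theorem 3.2 to the odd mapping $f_o$, both with bound $\psi$, obtaining a unique quartic $Q$ and a unique cubic $C$ with
$$\|f_e(x)-Q(x)\|\le\tfrac{1}{16}\sum_{i=(s-1)/2}^{\infty}2^{4s(i+1)}\psi(0,\tfrac{x}{2^{s(i+1)}})$$
and
$$\|f_o(x)-C(x)\|\le\tfrac{1}{8}\sum_{i=(s-1)/2}^{\infty}2^{3s(i+1)}\psi(0,\tfrac{x}{2^{s(i+1)}}).$$
The point that genuinely needs checking is that the single hypotheses (3.14)--(3.15) supply the convergence requirements of both earlier theorems at once. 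For $s=1$ only the $2^{4si}$ term in (3.14) is present; since $2^{3i}\le 2^{4i}$ for $i\ge0$, convergence of the quartic series forces that of the cubic series, so the hypothesis of Theorem 3.2 is automatic, and the limit condition (3.15) is handled the same way. For $s=-1$ only the $2^{3si}$ term survives, and now $2^{-4i}\le 2^{-3i}$ makes the quartic series converge as soon as the cubic one does, covering Theorem 3.1. The mismatch between $\phi(0,x/2^{si-1})$ in (3.14) and $\phi(0,x/2^{si})$ required by the theorems is harmless: a shift $i\mapsto i\mp1$ changes the sum only by a finite initial term and a fixed power of $2$.

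Finally I would assemble the estimate via the triangle inequality $\|f(x)-Q(x)-C(x)\|\le\|f_e(x)-Q(x)\|+\|f_o(x)-C(x)\|$ and substitute $\psi(0,u)=\frac12[\phi(0,u)+\phi(0,-u)]$; this converts the prefactors $\frac1{16}$ and $\frac18$ into $\frac1{32}$ and $\frac1{16}$ and produces exactly
$$\sum_{i=(s-1)/2}^{\infty}\left\{\left(\frac{2^{4s(i+1)}}{32}+\frac{2^{3s(i+1)}}{16}\right)\left[\phi(0,\tfrac{x}{2^{s(i+1)}})+\phi(0,\tfrac{-x}{2^{s(i+1)}})\right]\right\}.$$
For uniqueness I would use that every quartic function is even and every cubic function is odd: if $f=Q'+C'$ is another such decomposition with $Q'$ quartic and $C'$ cubic satisfying the same bound, then $Q'$ and $C'$ are the even and odd parts of $Q'+C'$, hence approximate $f_e$ and $f_o$ within the relevant sums, and the uniqueness clauses of Theorems 3.1 and 3.2 give $Q'=Q$ and $C'=C$. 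I expect the only real obstacle to be the bookkeeping of the middle paragraph---confirming that the one-term hypothesis (3.14) dominates both the quartic and the cubic convergence conditions for each sign of $s$; the remainder is the routine triangle-inequality assembly and constant-chasing.
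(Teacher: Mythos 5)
Your proposal is correct and follows essentially the same route as the paper: decompose $f$ into even and odd parts, apply Theorems 3.1 and 3.2 with the symmetrized bound $\frac12[\phi(x,y)+\phi(-x,-y)]$, and combine via the triangle inequality. If anything, you are more careful than the paper, which silently assumes that (3.14)--(3.15) imply the hypotheses of both earlier theorems and does not spell out the uniqueness of the pair $(Q,C)$; your verification of the dominance of the convergence conditions for each sign of $s$ and your even/odd uniqueness argument fill in exactly those gaps.
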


\begin{proof}  Let $f_e(x)=\frac{1}{2}(f(x)+f(-x))$ for all $x\in X. $  Then $f_e(0)=0$ and $f_e$ is even function
satisfying \hspace {0.3cm}
$\|D_{f_e}(x,y)\|\leq\frac{1}{2}[\phi(x,y)+\phi(-x,-y)]$ \hspace
{0.3cm} for all $x,y\in X. $ From Theorem 3.1, it follows that
there exists a unique quartic function $Q:X\rightarrow Y$
satisfies

 $$\parallel
f_e(x)-Q(x)\parallel\leq\frac{1}{32}\sum^{\infty}_{i=\frac{s-1}{2}}
\{2^{4s(i+1)} \phi(0,\frac{x}{2^{s(i+1)}})+2^{4s(i+1)}
\phi(0,\frac{-x}{2^{s(i+1)}})\} \eqno\hspace {0.5cm} (3.16)$$ for
all $x\in X. $ Let now $f_o(x)=\frac{1}{2} (f(x)-f(-x))$ for all
$x\in X. $ Then $f_o$ is odd function satisfying

$$\|D_{f_o}(x,y)\|\leq
\frac{1}{2} [\phi(x,y)+\phi(-x,-y)]$$ for all $x,y\in X. $ Hence in
view of Theorem 3.2,  it follows that there exists a unique cubic
function $C:X\rightarrow Y$ such that

 $$\parallel
f_o(x)-C(x)\parallel\leq\frac{1}{16}\sum^{\infty}_{i=\frac{s-1}{2}}
\{2^{3s(i+1)}\phi(0,\frac{x}{2^{s(i+1)}})+2^{3s(i+1)}
\phi(0,\frac{-x}{2^{s(i+1)}})\}  \eqno\hspace {0.5cm} (3.17)$$ for
all $x\in X. $ On the other hand we have $f(x)=f_e(x)+f_o(x)$ for
all $x\in X. $ Then by combining (3.16) and (3.17),  it follows
that

\begin{align*}
\parallel f(x)-C(x)-Q(x)\parallel
&\leq \hspace {0.3cm} \parallel f_e(x)-Q(x)\parallel+\parallel f_o(x)-C(x)\parallel\\
&\leq\sum^{\infty}_{i=\frac{s-1}{2}}
\{(\frac{2^{4s(i+1)}}{32}+\frac{2^{3s(i+1)}}{16})
[\phi(0,\frac{x}{2^{s(i+1)}})+\phi(0,\frac{-x}{2^{s(i+1)}})]\}
\end{align*}
for all $x\in X, $ and the proof of Theorem is complete.
\end{proof}
We are going to investigate the Hyers-Ulam -Rassias stability
problem for functional equation  (1.5).

\begin{cor}\label{t2}
 Let $p\in (-\infty,3)\bigcup(4,+\infty), $ $\theta>0. $ Suppose $f:X\rightarrow Y$ satisfies $f(0)=0
, $ and inequality
$$\|D_f (x,y)\|\leq\theta(\|x\|^p+\|y\|^p),$$
for all $x,y\in X$. Then there exists a unique quartic function
$Q:X\rightarrow Y$, and a unique cubic function $C:X\rightarrow Y$
satisfying

$$\parallel f(x)-Q(x)-C(x)\parallel \leq \begin{cases}
\theta \|x\|^p(\frac{1}{2^p-2^4}+\frac{1}{2^p-2^3}), & p>4,\\
 \theta \|x\|^p(\frac{1}{2^4-2^p}+\frac{1}{2^3-2^p}), & p<3
\end{cases} $$
for all $x\in X. $
\end{cor}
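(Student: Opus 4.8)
The plan is to obtain this corollary as a direct specialization of Theorem 3.4, taking the upper bound to be $\phi(x,y)=\theta(\|x\|^p+\|y\|^p)$, which visibly maps $X\times X$ into $[0,\infty)$ and for which the hypothesis $f(0)=0$ is already assumed. The only genuine decision is the value of $s\in\{1,-1\}$: I would take $s=1$ when $p>4$ and $s=-1$ when $p<3$, since these are precisely the choices under which the relevant series converge.

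First I would verify the hypotheses (3.14) and (3.15) of Theorem 3.4 for this $\phi$. Because $\|0\|^p=0$, one has $\phi(0,\frac{x}{2^{si-1}})=\theta\|x\|^p 2^{-p(si-1)}=\theta 2^p\|x\|^p 2^{-psi}$. For $s=1$ the coefficient $|s|-s$ vanishes, so the series in (3.14) reduces to a constant multiple of $\sum_{i\ge 0}2^{(4-p)i}$, a convergent geometric series precisely because $p>4$; for $s=-1$ the coefficient $|s|+s$ vanishes and the series collapses to a multiple of $\sum_{i\ge 0}2^{(p-3)i}$, convergent precisely because $p<3$. The limit condition (3.15) follows from the same exponents tending to zero. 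With (3.14) and (3.15) in force, Theorem 3.4 immediately supplies the unique quartic $Q$ and the unique cubic $C$, together with the abstract bound expressed through $\phi$.

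It then remains only to evaluate that abstract bound for the present $\phi$. Here I would use $\|{-x}\|=\|x\|$, so that $\phi(0,\frac{x}{2^{s(i+1)}})+\phi(0,\frac{-x}{2^{s(i+1)}})=2\theta\|x\|^p 2^{-ps(i+1)}$, and substitute $j=i+1$ to turn the sum from Theorem 3.4 into $2\theta\|x\|^p\sum_j(\frac{1}{32}2^{(4-p)sj}+\frac{1}{16}2^{(3-p)sj})$, the index $j$ running over $j\ge 1$ when $s=1$ and over $j\ge 0$ when $s=-1$. Summing the two geometric series and using $\frac{2^4}{32}=\frac12$ and $\frac{2^3}{16}=\frac12$ should collapse the bound to $\theta\|x\|^p(\frac{1}{2^p-2^4}+\frac{1}{2^p-2^3})$ in the case $p>4$ and to $\theta\|x\|^p(\frac{1}{2^4-2^p}+\frac{1}{2^3-2^p})$ in the case $p<3$, matching the two branches of the stated piecewise estimate.

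The computation presents no conceptual obstacle; the one point requiring care is aligning the lower summation limit $\frac{s-1}{2}$ (equal to $0$ for $s=1$ and $-1$ for $s=-1$) with the shifted index $j$, so that each geometric series is summed from its correct initial term and the two closed forms — $\frac{2^{4-p}}{1-2^{4-p}}=\frac{2^4}{2^p-2^4}$ for $s=1$ versus $\frac{1}{1-2^{p-4}}=\frac{2^4}{2^4-2^p}$ for $s=-1$ — land on the intended side of the case distinction.
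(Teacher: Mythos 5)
Your proposal is correct and follows essentially the same route as the paper: both specialize the combined stability theorem (Theorem 3.3 in the paper's numbering, with conditions (3.14)--(3.15)) to $\phi(x,y)=\theta(\|x\|^p+\|y\|^p)$, taking $s=1$ for $p>4$ and $s=-1$ for $p<3$, and then sum the resulting geometric series. Your closed forms $\frac{2^{4-p}}{1-2^{4-p}}=\frac{2^4}{2^p-2^4}$ and $\frac{1}{1-2^{p-4}}=\frac{2^4}{2^4-2^p}$ reproduce exactly the two branches of the stated bound, matching the paper's computation.
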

\begin{proof} Let $s=1$ in Theorem 3.3.  Then by taking
$\phi(x,y)=\theta(\|x\|^p+\|y\|^p)$ for all $x,y\in X, $ the
relations (3.14) and (3.15) hold for $p>4. $ Then there exists a
unique quartic function $Q:X\rightarrow Y$ and a unique cubic
function $C:X\rightarrow Y$ satisfying

 $$\parallel
f(x)-Q(x)-C(x)\parallel\leq\theta
\|\frac{x}{2}\|^p(\frac{1}{1-2^{4-p}}+\frac{1}{1-2^{3-p}})$$ for all
$x\in X. $ Let now $s=-1$ in Theorem 3.3 and put
$\phi(x,y)=\theta(\|x\|^p+\|y\|^p)$ for all $x,y\in X. $ Then the
relations (3.14) and (3.15) hold for $p<3. $ Then there exists a
unique quartic function $Q:X\rightarrow Y$ and a unique cubic
function $C:X\rightarrow Y$ satisfying

$$\parallel
f(x)-Q(x)-C(x)\parallel\leq
\theta\|x\|^p(\frac{1}{2^4-2^p}+\frac{1}{2^3-2^p})$$ for all $x\in
X. $
\end{proof}

Similarly, we can prove the following Ulam stability problem for
functional equation (1.5) controlled by the mixed type product-sum
function
$$(x,y)\mapsto \theta(\|x\|_X^u\|y\|_X^v+\|x\|^p+\|y\|^p)$$
introduced by J. M. Rassias (see for example [32]).

\begin{cor}\label{t2}
 Let $u,v,p$ be real numbers such that  $u+v, p\in (-\infty,3)\bigcup(4,+\infty), $ and let  $\theta>0. $ Suppose $f:X\rightarrow Y$ satisfies $f(0)=0
, $ and inequality
$$\|D_f (x,y)\|\leq\theta(\|x\|_X^u\|y\|_X^v+\|x\|^p+\|y\|^p),$$
for all $x,y\in X$. Then there exists a unique quartic function
$Q:X\rightarrow Y$, and a unique cubic function $C:X\rightarrow Y$
satisfying

$$\parallel f(x)-Q(x)-C(x)\parallel \leq \begin{cases}
\theta \|x\|^p(\frac{1}{2^p-2^4}+\frac{1}{2^p-2^3}), & p>4,\\
 \theta \|x\|^p(\frac{1}{2^4-2^p}+\frac{1}{2^3-2^p}), & p<3
\end{cases} $$
for all $x\in X. $
\end{cor}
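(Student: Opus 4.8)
The plan is to deduce the statement directly from Theorem 3.3 by taking the control function $\phi(x,y)=\theta(\|x\|_X^u\|y\|_X^v+\|x\|^p+\|y\|^p)$ and verifying its hypotheses (3.14) and (3.15). First I would split into the two cases prescribed by the parameter $s$: take $s=1$ to handle $p>4$ (together with $u+v>4$), and $s=-1$ to handle $p<3$ (together with $u+v<3$). For $s=1$ the limit condition (3.15) reduces to showing $2^{4n}\phi(2^{-n}x,2^{-n}y)\to 0$; its three summands scale like $2^{n(4-(u+v))}\|x\|^u\|y\|^v$, $2^{n(4-p)}\|x\|^p$ and $2^{n(4-p)}\|y\|^p$, each of which tends to $0$ precisely because $u+v>4$ and $p>4$. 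The same computation shows summability in (3.14). The case $s=-1$ is symmetric: the scaling exponents become $u+v-3$ and $p-3$, both negative exactly when $u+v<3$ and $p<3$.

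The crucial observation, and the reason this corollary coincides formally with Corollary 3.4, is that the stability estimate supplied by Theorem 3.3 involves $\phi$ only through the values $\phi(0,\cdot)$. Since $\|0\|_X^u=0$, the mixed product term drops out and $\phi(0,y)=\theta\|y\|^p$, which is exactly the control function of Corollary 3.4. Consequently the bound produced by Theorem 3.3 is identical to the one obtained there, and the remaining step is merely to sum the resulting geometric series. For $p>4$ this yields $\theta\|\frac{x}{2}\|^p(\frac{1}{1-2^{4-p}}+\frac{1}{1-2^{3-p}})$, which after multiplying numerator and denominator by $2^p$ becomes $\theta\|x\|^p(\frac{1}{2^p-2^4}+\frac{1}{2^p-2^3})$; for $p<3$ the analogous summation gives $\theta\|x\|^p(\frac{1}{2^4-2^p}+\frac{1}{2^3-2^p})$.

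I do not expect a genuine obstacle, since the whole argument runs parallel to Corollary 3.4; the only new content is confirming that the mixed term $\|x\|_X^u\|y\|_X^v$ disturbs neither the summability condition (3.14) nor the limit condition (3.15). The one point demanding care is the decay of this term under the scaling $x\mapsto 2^{-sn}x$, $y\mapsto 2^{-sn}y$: its growth exponent is controlled by $u+v$ rather than by $p$, which is exactly why the hypothesis must place $u+v$ in the same range $(-\infty,3)\cup(4,+\infty)$ as $p$ (and why, with the usual convention $\|0\|_X^u=0$, this extra term never enters the final estimate). Once these verifications are in place, the existence and uniqueness of the quartic function $Q$ and the cubic function $C$ are inherited verbatim from Theorem 3.3.
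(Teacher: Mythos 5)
Your proposal is correct and matches the paper's intent exactly: the paper gives no explicit proof of this corollary, stating only that it follows ``similarly'' to Corollary 3.4, and your argument --- applying Theorem 3.3 with $\phi(x,y)=\theta(\|x\|_X^u\|y\|_X^v+\|x\|^p+\|y\|^p)$, checking (3.14)--(3.15) via the scaling exponents $u+v$ and $p$, and observing that the final bound only sees $\phi(0,y)=\theta\|y\|^p$ so the geometric-series computation from Corollary 3.4 carries over verbatim --- is precisely that omitted argument. Your remark that the mixed term drops out of the estimate (under the convention $\|0\|_X^u=0$) is the right observation explaining why the bound coincides with that of Corollary 3.4.
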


By Corollary 3.4, we solve the following Hyers-Ulam stability
problem for functional equation  (1.5).

\begin{cor}\label{t2} Let $\epsilon$ be a positive real number. Suppose   $f:X\rightarrow Y$
satisfies $f(0)=0$, and $\|D_f(x,y)\|\leq\epsilon$, for all $x,y\in
X$. Then there exists a unique quartic function $Q:X\rightarrow Y$,
and a unique cubic function $C:X\rightarrow Y$ satisfying

$$\parallel f(x)-Q(x)-C(x)\parallel \leq\frac {22}{105} ~\epsilon$$
for all $x\in X$.
\end{cor}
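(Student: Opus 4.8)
The plan is to read the hypothesis $\|D_f(x,y)\|\le\epsilon$ as the special control function $\phi(x,y)=\epsilon$ and to feed it into the mixed stability result already proved. Concretely, the constant bound is exactly the instance $p=0$, $\theta=\epsilon$ of Corollary 3.4, and since $p=0\in(-\infty,3)$ we land in the branch $s=-1$ of Theorem 3.3; so the first task is to confirm that a constant $\phi$ meets the summability hypotheses (3.14) and (3.15) for $s=-1$. For $s=-1$ the coefficient $|s|+s$ vanishes, so in (3.14) only the cubic-type summand survives and it reduces to $\sum_{i\ge0}2\cdot2^{-3i}\phi(0,2^{i+1}x)=2\epsilon\sum_{i\ge0}2^{-3i}<\infty$, a convergent geometric series; the same damping factor makes (3.15) read $\lim_{n\to\infty}2\epsilon\,2^{-3n}=0$. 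Both hold, so Theorem 3.3 applies and produces a unique quartic $Q$ and a unique cubic $C$ (the quartic approximation converges as well, its series being dominated by the cubic one).

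It then remains to evaluate the estimate at $\phi\equiv\epsilon$. Substituting $p=0$, $\theta=\epsilon$ and $\|x\|^{0}=1$ into the $p<3$ line of Corollary 3.4 gives immediately
$$\|f(x)-Q(x)-C(x)\|\le\epsilon\Big(\frac{1}{2^{4}-2^{0}}+\frac{1}{2^{3}-2^{0}}\Big)=\epsilon\Big(\frac{1}{15}+\frac{1}{7}\Big)=\frac{22}{105}\,\epsilon,$$
which is the asserted bound. I would double-check this number directly from Theorem 3.3 to be safe: with $s=-1$ the lower summation index $\tfrac{s-1}{2}$ equals $-1$, and since $\phi(0,\cdot)\equiv\epsilon$ the bracket $[\phi(0,\cdot)+\phi(0,-\cdot)]$ is $2\epsilon$ at every index, so after the shift $j=i+1$ the bound becomes $2\epsilon\sum_{j\ge0}\big(\tfrac{2^{-4j}}{32}+\tfrac{2^{-3j}}{16}\big)=2\epsilon\big(\tfrac{1}{32}\cdot\tfrac{16}{15}+\tfrac{1}{16}\cdot\tfrac{8}{7}\big)=2\epsilon\big(\tfrac{1}{30}+\tfrac{1}{14}\big)=\tfrac{22}{105}\epsilon$, in agreement.

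There is essentially no hard analytic step: uniqueness, the even/odd splitting into a quartic and a cubic part, and the convergence of the approximating sequences are all inherited from Theorems 3.1--3.3. The only points needing care are the bookkeeping of the $s=-1$ specialisation --- remembering that $|s|+s=0$ suppresses the first summand in (3.14)--(3.15) and that the final sum starts at $i=-1$ --- and the edge case $p=0$, where one must check that the constant bound genuinely satisfies the hypotheses of Corollary 3.4. It does, with $\theta=\epsilon$, because the only values of $\phi$ entering the final estimate are the on-axis values $\phi(0,\cdot)=\epsilon$. Once these are settled, the two geometric series sum to $\tfrac{16}{15}$ and $\tfrac{8}{7}$ and the elementary identity $\tfrac{1}{30}+\tfrac{1}{14}=\tfrac{11}{105}$ closes the argument.
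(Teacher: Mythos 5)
Your proposal is correct and follows exactly the route the paper intends (the paper gives no explicit proof, merely the remark ``By Corollary 3.4''), namely specializing Corollary 3.4 to $p=0$, $\theta=\epsilon$ in the $s=-1$ branch. Your additional direct verification from Theorem 3.3 with the constant control $\phi\equiv\epsilon$, giving $2\epsilon\left(\tfrac{1}{30}+\tfrac{1}{14}\right)=\tfrac{22}{105}\epsilon$, is a welcome safeguard, since it sidesteps the $\|x\|^{0}$ ambiguity at $x=0$ in the literal reading of Corollary 3.4 and confirms the stated constant.
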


\paragraph{\bf Acknowledgement.}
The authors would like to express their sincere thanks to referee
 for his invaluable comments.  The first  author would like to thank the
Semnan University for its financial support.  Also, The third author
would like to thank the office of gifted students at Semnan
University for its financial support.

{\small


}
\end{document}